\author{Josse van Dobben de Bruyn}
\title{Almost all positive continuous linear functionals can~be~extended}
\date{28 April 2021} 
\newcommand{\myaddr}[1]{\gdef\my@address{\par\textsc{#1}}}
\newcommand{\mycuraddr}[1]{\gdef\my@curaddr{\par\textsc{#1}}}
\newcommand{\myemail}[1]{\gdef\my@email{\par\textit{E-mail address:} \texttt{\href{mailto:#1}{#1}.}}}
\newcommand{\mythanks}[1]{\gdef\my@thanks{#1}}
\newcommand{\mysubjclass}[2][2010]{\gdef\my@subjclass{#1 \textit{Mathematics Subject Classification}. #2.}}
\newcommand{\mykeywords}[1]{\gdef\my@keywords{\textit{Key words and phrases}. #1.}}
\newcommand{\mymaketitle}{%
	\let\@oldauthor\@author%
	\gdef\@author{\@oldauthor\texorpdfstring{\footnotemark}{}}%
	\let\@oldthanks\@thanks%
	\gdef\@thanks{\@oldthanks\footnotetext[0]{\my@subjclass}\footnotetext[0]{\my@keywords}\footnotetext[\the\c@footnote]{\my@thanks}}%
	\maketitle
}
\def\mylinkcolor{black!75!blue}
\newcommand{\hair}{\ifmmode\mskip1mu\else\kern0.08em\fi}
\declaretheorem[style=definition,sibling=equation]{definition}
\declaretheorem[style=definition,sibling=definition]{situation}
\declaretheorem[style=remark,sibling=definition]{remark}
\declaretheorem[style=plain,sibling=definition]{theorem}
\declaretheorem[style=plain,sibling=definition]{corollary}
\declaretheorem[style=plain,sibling=definition]{proposition}
\declaretheorem[style=definition,numbered=no]{acknowledgement}
\newcommand{\R}{\ensuremath{\mathbb{R}}}
\newcommand{\algdual}{^*}
\newcommand{\topdual}{'}
\newcommand{\tensor}{\mathbin{\otimes}}
\newcommand{\mywedge}{\mathcal K}
\newcommand{\rwedge}{\mathcal R}
\newcommand{\boundary}{\partial}
\newcommand{\mybasis}{\mathcal B}
\DeclareMathOperator{\id}{id}
\begin{document}
\mymaketitle
\begin{abstract}
	Let $F$ be an ordered topological vector space (over $\R$) whose positive cone $F_+$ is weakly closed, and let $E \subseteq F$ be a subspace. We prove that the set of positive continuous linear functionals on $E$ that can be extended (positively and continuously) to $F$ is weak\nobreakdash-$*$ dense in the topological dual wedge $E_+\topdual$.
	Furthermore, we show that this result cannot be generalized to arbitrary positive operators, even in finite-dimensional spaces.
\end{abstract}

\bigskip
\noindent
This short note is about extensions of positive operators on a (pre-)ordered topological vector space over the real numbers. For notation and terminology, see \cite{Cones&Duality}.

Extension theorems for positive operators have been studied in great detail, and can be found in many textbooks on ordered vector spaces (e.g.~\cite[\S V.5]{Schaefer}, \cite[\S 1.4--1.5]{Cones&Duality}).
Many classical extension theorems provide sufficient conditions for a given operator to have a positive extension.
These include a classical extension theorem of Kantorovich (see \cite[Thm.~1.30]{Cones&Duality}), and a theorem of Lotz on positive operators from a Banach sublattice of a Banach lattice to an AL-space (see \cite[Prop.~3.2]{Lotz}, \cite[Thm.~II.8.9]{Schaefer-Banach-lattices}).

In general, not every positive operator can be extended, and the problem already arises when looking at functionals.
In 1957, Mirkil proved that, for a finite-dimensional ordered vector space $F$ whose positive cone $F_+$ is closed, all positive linear functionals on all subspaces of $F$ can be extended if and only if $F_+$ is polyhedral (see \cite[Cor.~1]{Mirkil}, \cite[Thm.~4.13]{Klee}).
That same year, Bauer and Namioka independently found a necessary and sufficient condition for a positive continuous linear functional $E \to \R$ defined on a subspace $E$ of a topological vector space $F$ to have a positive and continuous extension to $F$ (see \cite[Thm.~1]{Bauer}, \cite[Thm.~4.4]{Namioka}, \cite[Thm.~V.5.4]{Schaefer}).

In this note, we look at things from a different perspective.
Instead of determining whether or not a positive operator can be extended, we try to \emph{approximate} it by extendable positive operators.
We prove that the extendable positive linear functionals are weak-$*$ dense in the dual wedge.
Furthermore, we show that this result cannot be extended to arbitrary positive operators, even if the spaces are finite-dimensional.

Although the proofs in this note are relatively simple, the results appear to be unknown in the ordered vector spaces community.
The author has not been able to locate an earlier proof (or statement) of \autoref{thm:main} or \autoref{cor:almost-all} in the literature.

For positive functionals, we have the following positive result.

\begin{theorem}
	\label{thm:main}
	Let $F$ be a preordered topological vector space, and let $E \subseteq F$ be a subspace.
	If the topological dual $F\topdual$ separates points on $F$, and if the positive wedge $F_+$ is weakly closed, then the set of all positive continuous linear functionals on $E$ that can be extended positively and continuously to $F$ is weak\nobreakdash-$*$ dense in $E_+\topdual$.
\end{theorem}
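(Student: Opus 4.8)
The plan is to recast the statement in terms of dual cones and then invoke the bipolar theorem. For a subset $A$ of one member of a dual pair, write $A^{\circ} := \{\, y : \langle y, a\rangle \ge 0 \text{ for all } a \in A \,\}$ for its dual cone in the other member; in particular, for a wedge $C$ in $F$ or $E$ this $C^{\circ}$ is the dual wedge, so that $E_+\topdual = (E_+)^{\circ}$ and $F_+\topdual = (F_+)^{\circ}$. Let $r \colon F\topdual \to E\topdual$, $r(\psi) = \psi|_E$, be the restriction map; it is well defined (the restriction of a continuous functional to a subspace with the subspace topology is continuous), linear, and weak-$*$-to-weak-$*$ continuous. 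A positive continuous functional on $E$ admits a positive continuous extension to $F$ precisely when it lies in $r(F_+\topdual)$, and conversely $r(F_+\topdual) \subseteq E_+\topdual$ always. So the theorem asserts exactly that $\overline{r(F_+\topdual)}^{\,w*} = E_+\topdual$.

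First I would record that, since $F\topdual$ separates the points of $F$, the pair $\langle F, F\topdual\rangle$ is a dual pair, and then so is $\langle E, E\topdual\rangle$: each nonzero $x \in E$ is separated from $0$ by the restriction to $E$ of some member of $F\topdual$, while each nonzero $\varphi \in E\topdual$ is nonzero at some point of $E$. Everything that follows takes place inside these dual pairs and their weak topologies, so no local convexity of $F$ or $E$ is required. Now $r(F_+\topdual)$ is a convex cone, so its weak-$*$ closure is a weak-$*$-closed convex cone, and the bipolar theorem in $\langle E\topdual, E\rangle$ gives $\overline{r(F_+\topdual)}^{\,w*} = \bigl(r(F_+\topdual)^{\circ}\bigr)^{\circ}$.

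It remains to identify the inner dual cone. For $x \in E$ one has $\langle \psi|_E, x\rangle = \langle \psi, x\rangle$, hence $r(F_+\topdual)^{\circ} = \{\, x \in E : \psi(x) \ge 0 \text{ for all } \psi \in F_+\topdual \,\} = E \cap (F_+\topdual)^{\circ}$. This is the point at which the weak closedness of $F_+$ enters: the bipolar theorem applied to the weakly closed convex cone $F_+$ in $\langle F, F\topdual\rangle$ gives $(F_+\topdual)^{\circ} = (F_+)^{\circ\circ} = F_+$, so that $r(F_+\topdual)^{\circ} = E \cap F_+ = E_+$. Taking dual cones once more, $\overline{r(F_+\topdual)}^{\,w*} = (E_+)^{\circ} = E_+\topdual$, which is precisely the asserted density.

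The argument is essentially bookkeeping once it is set up correctly; the points that need care are (i) verifying that ``extendable positive functional on $E$'' really does mean ``element of $r(F_+\topdual)$'' and that $r(F_+\topdual) \subseteq E_+\topdual$, so that $E_+\topdual$ is the correct target; and (ii) observing that the bipolar theorem is legitimately available here, being a statement purely about the weak topologies of the dual pairs $\langle F, F\topdual\rangle$ and $\langle E, E\topdual\rangle$ — whose non-degeneracy is exactly what the hypothesis ``$F\topdual$ separates points'' supplies — so that the possible failure of local convexity of $F$ itself is irrelevant.
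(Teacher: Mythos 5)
Your proposal is correct and follows essentially the same route as the paper: identify the extendable functionals with the restriction wedge $\{\psi|_E : \psi \in F_+\topdual\}$, use weak closedness of $F_+$ (via the bipolar theorem in $\langle F, F\topdual\rangle$) to show its predual wedge in $E$ is exactly $E_+$, and then apply the bipolar theorem in $\langle E\topdual, E\rangle$ to conclude the weak-$*$ closure is $E_+\topdual$. The extra bookkeeping you include (well-definedness of the restriction map and non-degeneracy of the dual pairs) is sound and merely makes explicit what the paper leaves implicit.
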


We make two remarks about the statement of \autoref{thm:main}. First, if $F$ is locally convex, then we may replace the requirement that $F_+$ is weakly closed by the requirement that $F_+$ is closed. These two requirements are now equivalent, because $F_+$ is convex.

Second, if $F_+$ is weakly closed and if $F_+ \cap -F_+ = \{0\}$, then $\{0\}$ is also weakly closed, so the weak topology is Hausdorff. Hence, if $F_+$ is a cone, then the requirement that $F\topdual$ separates points on $F$ is automatically met. Thus, the statement from the abstract is recovered.


\begin{proof}[{Proof of \autoref{thm:main}}]
	Define $\rwedge := \{\varphi|_E \, : \, \varphi \in F_+\topdual\} \subseteq E_+\topdual$, and note that $\rwedge$ is the wedge of positive continuous linear functionals $E \to \R$ that can be extended positively and continuously to $F$.
	Since $F_+$ is weakly closed, we have
	\[ F_+ = \big\{x \in F \, : \, \langle x , \varphi \rangle \geq 0 \ \text{for all $\varphi \in F_+\topdual$}\big\} . \]
	It follows that
	\[ E_+ := F_+ \cap E = \big\{x \in E \, : \, \langle x , \varphi \rangle \geq 0 \ \text{for all $\varphi \in F_+\topdual$}\big\} . \]
	This shows that $E_+$ is the predual wedge of $\rwedge$. Hence, by the bipolar theorem, $E_+\topdual$ is the weak\nobreakdash-$*$ closure of $\rwedge$.
\end{proof}

In the finite-dimensional case, we have the following immediate corollary.

\begin{corollary}
	\label{cor:almost-all}
	Let $F$ be a finite-dimensional preordered vector space, and let $E \subseteq F$ be a subspace. If the positive wedge $F_+$ is closed, then $\lambda$-almost all positive linear functionals $E \to \R$ can be extended positively to $F$, where $\lambda$ denotes the Lebesgue \textup(i.e.~Haar\textup) measure on $E\algdual$.
\end{corollary}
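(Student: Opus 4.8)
The plan is to derive this directly from \autoref{thm:main} together with the elementary fact that, in a finite-dimensional space, a convex set and its closure differ by a Lebesgue-null set. First I would check that the hypotheses of \autoref{thm:main} hold automatically here: on a finite-dimensional space $F$ the dual $F\algdual = F\topdual$ always separates points, the weak topology coincides with the usual one, and a closed wedge is in particular weakly closed. So \autoref{thm:main} applies, and, keeping the notation of its proof, we set $\rwedge := \{\varphi|_E : \varphi \in F_+\topdual\} \subseteq E_+\algdual$, which is exactly the set of positive linear functionals $E \to \R$ admitting a positive extension to $F$. The theorem gives $\overline{\rwedge} = E_+\algdual$, the closure being taken in the usual topology of $E\algdual$. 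Moreover $\rwedge$ is convex, since it is the image of the wedge $F_+\topdual$ under the linear restriction map $F\algdual \to E\algdual$.

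Since the positive linear functionals on $E$ are precisely the members of $E_+\algdual$, the set of non-extendable ones is $E_+\algdual \setminus \rwedge = \overline{\rwedge} \setminus \rwedge$, so it remains to show this set is $\lambda$-null. This follows from the general statement that $\overline{C} \setminus C$ is Lebesgue-null for every convex $C \subseteq \R^n$, which I would prove by a case split. If $C$ has empty interior it lies in a proper affine subspace, so $\overline{C}$ is contained in a hyperplane and is already null. Otherwise $C$ has nonempty interior; then $\operatorname{int}(\overline{C}) = \operatorname{int}(C)$ (a standard property of convex sets with interior points), hence $\overline{C} \setminus C \subseteq \overline{C} \setminus \operatorname{int}(C) = \boundary\overline{C}$, and the boundary of a convex set with nonempty interior in $\R^n$ has measure zero (a standard fact; e.g.\ translating and dilating $\boundary\overline{C}$ toward an interior point produces pairwise disjoint copies, after intersecting with a ball). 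Applying this with $C = \rwedge$, after identifying $E\algdual$ with $\R^{\dim E}$, completes the proof.

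I do not anticipate a genuine obstacle: the only conceptual point is the passage from topological density, which is all that \autoref{thm:main} provides, to measure-theoretic genericity, and this passage works precisely because $\rwedge$ is convex. For a merely dense subset the conclusion would be false, so it is the geometry of the wedge, and not just its density, that is doing the work here.
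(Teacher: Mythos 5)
Your proposal is correct and follows essentially the same route as the paper: apply \autoref{thm:main} (noting that in finite dimensions weak-$*$ density is ordinary density and $E_+\topdual = E_+\algdual$), use convexity of $\rwedge$, and conclude because $\overline{\rwedge}\setminus\rwedge$ lies in the boundary of a convex set, which is Lebesgue-null. The only difference is cosmetic: the paper cites Lang for the null-boundary fact, whereas you supply a short self-contained case argument, and you spell out the verification of the hypotheses of \autoref{thm:main} which the paper leaves implicit.
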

\begin{proof}
	The set $\rwedge \subseteq E_+\algdual$ of extendable positive linear functionals is a convex set, and it follows from \autoref{thm:main} that $\overline{\rwedge} = E_+\algdual$.
	The result follows since $E_+\algdual \setminus \rwedge = \overline{\rwedge} \setminus \rwedge \subseteq \boundary \rwedge$, and the boundary of a convex set in $\R^n$ has Lebesgue measure zero (e.g.~\cite{Lang}).
\end{proof}



One might ask if similar results hold for arbitrary positive operators. Unfortunately, this is not the case, even if the spaces are finite-dimensional, as we will now demonstrate.

A wedge $\mywedge \subseteq E$ is a \emph{simplex cone} (or \emph{Yudin cone}) if there is an algebraic basis $\mybasis$ of $E$ such that $\mywedge$ is the wedge generated by $\mybasis$.

If $E$ and $G$ are vector spaces, and if $\varphi \in E\algdual$ and $z \in G$, then we write $z \tensor \varphi$ for the linear map $E \to G$, $x \mapsto \langle x, \varphi \rangle z$.

For our counterexample, we will consider the following situation.

\begin{situation}
	\label{sit:polyhedral-cone}
	Let $E$ be a finite-dimensional vector space, and let $E_+ \subseteq E$ be a generating polyhedral cone which is \emph{not} a simplex cone. Let $\varphi_1,\ldots,\varphi_m \in E_+\algdual$ be representatives of the extremal rays of $E_+\algdual$. Then $E_+ = \bigcap_{i=1}^m \{x \in E \, : \, \langle x , \varphi_i \rangle \geq 0\}$, and every positive linear functional is a positive combination of $\varphi_1,\ldots,\varphi_m$. Additionally, let $F := \R^m$ with the standard cone $F_+ := \R_{\geq 0}^m$, so that the map $T : E \to F$, $x \mapsto (\varphi_1(x),\ldots,\varphi_m(x))$ is bipositive. We will identify $E$ with a subspace of $F$ via this map.
\end{situation}

\begin{proposition}
	\label{prop:extendable-vs-projective-cone}
	In \autoref{sit:polyhedral-cone}, the positive linear maps $E \to E$ that can be extended to a positive linear map $F \to E$ are precisely the maps of the form $\sum_{i=1}^k x_i \tensor \psi_i$ with $x_1,\ldots,x_k \in E_+$ and $\psi_1,\ldots,\psi_k \in E_+\algdual$.
\end{proposition}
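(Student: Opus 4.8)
The plan is to prove the two inclusions separately, exploiting the concrete structure of $F = \R^m$. Write $f_1, \dots, f_m$ for the standard unit vectors of $\R^m$ and $\pi_1, \dots, \pi_m \in F\algdual$ for the coordinate functionals, so that $f_j \in F_+$, each $\pi_j$ is positive on $F_+ = \R_{\geq 0}^m$, and $y = \sum_{j=1}^m \pi_j(y)\, f_j$ for every $y \in F$. The only point where the identification of $E$ with $T(E) \subseteq F$ enters is the observation that $\pi_j|_E = \varphi_j$, since the $j$-th coordinate of $T(x)$ is $\varphi_j(x)$; note in particular that $\varphi_j \in E_+\algdual$.

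For the inclusion ``extendable $\Rightarrow$ of the stated form'': suppose the positive map $S \colon E \to E$ admits a positive linear extension $\tilde S \colon F \to E$. Put $x_j := \tilde S(f_j)$; since $f_j \in F_+$, positivity of $\tilde S$ gives $x_j \in E_+$. From $y = \sum_j \pi_j(y)\, f_j$ we get $\tilde S = \sum_{j=1}^m x_j \tensor \pi_j$, and restricting to $E$ yields $S = \sum_{j=1}^m x_j \tensor \varphi_j$, which is of the required form (with $k = m$) because each $x_j \in E_+$ and each $\varphi_j \in E_+\algdual$.

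For the converse inclusion ``of the stated form $\Rightarrow$ extendable'': let $S = \sum_{i=1}^k x_i \tensor \psi_i$ with $x_i \in E_+$ and $\psi_i \in E_+\algdual$. By \autoref{sit:polyhedral-cone}, every positive functional on $E$ is a nonnegative combination of $\varphi_1, \dots, \varphi_m$, so we may write $\psi_i = \sum_{j=1}^m a_{ij} \varphi_j$ with all $a_{ij} \geq 0$. Setting $\tilde\psi_i := \sum_{j=1}^m a_{ij} \pi_j \in F\algdual$, we obtain a functional that is positive on $F_+$ and satisfies $\tilde\psi_i|_E = \psi_i$. Then $\tilde S := \sum_{i=1}^k x_i \tensor \tilde\psi_i$ maps $F$ into the linear span of $x_1, \dots, x_k$, hence into $E$; it is positive, because for $y \in F_+$ each coefficient $\tilde\psi_i(y)$ is nonnegative, so $\tilde S(y)$ is a nonnegative combination of $x_1, \dots, x_k \in E_+$; and it extends $S$, since $\tilde S|_E = \sum_i x_i \tensor \psi_i = S$.

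I do not expect any real obstacle here. All the content is already packaged into the explicit description of positive linear maps out of $\R^m$ and into the description of $E_+\algdual$ in terms of $\varphi_1, \dots, \varphi_m$ provided by \autoref{sit:polyhedral-cone}; the remainder is bookkeeping with the identification $E = T(E)$. Let me also point out that the hypothesis that $E_+$ is not a simplex cone plays no role in this proposition — it will only be needed afterwards, when comparing this class of operators with the full cone of positive maps $E \to E$.
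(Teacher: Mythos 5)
Your proof is correct and follows essentially the same route as the paper: the forward direction decomposes the extension along the standard basis of $\R^m$ and restricts using $\pi_j|_E = \varphi_j$, and the converse uses that every $\psi_i$ is a nonnegative combination of $\varphi_1,\dots,\varphi_m$ to build a positive extension (your per-functional extensions $\tilde\psi_i$ amount to the paper's regrouping $\sum_j y_j \tensor \varphi_j$ with $y_j \in E_+$). Your closing remark that the non-simplex hypothesis is not used here is also accurate; it only matters for the subsequent counterexample via the Barker--Loewy theorem.
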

\begin{proof}
	Let $e_1,\ldots,e_m$ denote the standard basis of $F = \R^m$.
	If $T : E \to E$ is a positive linear map that can be extended to a positive linear map $S : F \to E$, then we have
	\[ T(x) = S(\varphi_1(x),\ldots,\varphi_m(x)) = S(e_1) \varphi_1(x) + \cdots + S(e_m) \varphi_m(x), \]
	so $T$ can be written as $T = \sum_{j=1}^m S(e_j) \tensor \varphi_j$.
	
	Conversely, suppose that $T = \sum_{i=1}^k x_i \tensor \psi_i$ with $x_1,\ldots,x_k \in E_+$ and $\psi_1,\ldots,\psi_k \in E_+\algdual$. Every $\psi_i$ can be written as a positive combination of the $\varphi_1,\ldots,\varphi_m$, so after rearranging the terms we may write $T = \sum_{j=1}^m y_j \tensor \varphi_j$, where the $y_j$ are positive combinations of the $x_i$. In particular, $y_1,\ldots,y_m \in E_+$. Therefore the map $S : F \to E$, $e_j \mapsto y_j$ is a positive extension of $T$.
\end{proof}

The following theorem of Barker and Loewy tells us that approximation by operators of the form described in \autoref{prop:extendable-vs-projective-cone} is not always possible.

\begin{theorem}[{Barker--Loewy, \cite[Proposition 3.1]{Barker-Loewy}}]
	\label{thm:Barker-Loewy}
	Let $E$ be a finite-dimensional ordered vector space whose positive cone $E_+$ is closed and generating. Then the identity $\id_E : E \to E$ can be written as the limit of a sequence of operators of the form $\sum_{i=1}^k x_i \tensor \psi_i$ \textup(with $x_1,\ldots,x_k \in E_+$ and $\psi_1,\ldots,\psi_k \in E_+\algdual$\textup) if and only if $E_+$ is a simplex cone.
\end{theorem}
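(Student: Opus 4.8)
The plan is to characterise when $\id_E$ lies in the closure of the convex cone
\[ \mathcal{C} := \big\{ \sum_{i=1}^{k} x_i \tensor \psi_i \,:\, k \geq 1,\ x_1,\dots,x_k \in E_+,\ \psi_1,\dots,\psi_k \in E_+\algdual \big\} \]
of operators on $E$, and to show this happens exactly when $E_+$ is a simplex cone. First I would dispose of the degenerate case in which $E_+$ is not pointed: then every $\psi \in E_+\algdual$ vanishes on the lineality space $E_+ \cap (-E_+)$, so every operator in $\mathcal{C}$, hence in $\overline{\mathcal{C}}$, vanishes on it, whereas $\id_E$ does not; since a non-pointed cone is never a simplex cone, both sides of the asserted equivalence fail and there is nothing to prove. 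So I may assume $E_+$ is proper, and then $E_+\algdual$ is proper too. Next I would check that $\mathcal{C}$ is actually closed, so that ``limit of a sequence'' may be replaced by ``finite sum'': fixing compact bases $B$ of $E_+$ and $B^*$ of $E_+\algdual$, the set $S := \{x \tensor \psi : x \in B,\ \psi \in B^*\}$ is compact; each nonzero summand $x_i \tensor \psi_i$ is a positive multiple of an element of $S$, so $\mathcal{C}$ is the convex cone generated by $S$, namely $\R_{\geq 0} \cdot \operatorname{conv}(S)$; and $S$, hence $\operatorname{conv}(S)$, is separated from the origin by the functional $T \mapsto \langle Tg, f\rangle$, which is strictly positive on $S$ whenever $g$ and $f$ are interior points of $E_+$ and $E_+\algdual$. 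Thus $\mathcal{C}$ is a union of dilates of a compact convex set avoiding the origin, hence closed.

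After this reduction, the theorem becomes: $\id_E = \sum_{i=1}^{k} x_i \tensor \psi_i$ with $x_i \in E_+$ and $\psi_i \in E_+\algdual$ if and only if $E_+$ is a simplex cone. The ``if'' direction is immediate: given an algebraic basis $\mybasis = \{b_1,\dots,b_n\}$ of $E$ generating $E_+$, the dual basis $\beta_1,\dots,\beta_n$ lies in $E_+\algdual$ (since $\beta_j(b_i) = \delta_{ij} \geq 0$), and $\id_E = \sum_{j=1}^{n} b_j \tensor \beta_j$ is of the required form, with no limit needed.

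For the converse --- the substantive direction --- assume $\id_E = \sum_{i=1}^{k} x_i \tensor \psi_i$ with all $x_i \in E_+$ and $\psi_i \in E_+\algdual$ nonzero, and introduce the positive linear maps $\Psi : E \to \R^k$, $x \mapsto (\psi_1(x),\dots,\psi_k(x))$, and $X : \R^k \to E$, $(c_1,\dots,c_k) \mapsto \sum_{i=1}^{k} c_i x_i$. Then
\[ X \circ \Psi = \id_E , \]
so $\Psi$ is injective, $E_+ = \Psi^{-1}(\R_{\geq 0}^k)$, and $\Psi \circ X$ is a positive projection of $\R^k$ onto $\Psi(E)$; in other words $(E,E_+)$ is a positively complemented subspace of $(\R^k, \R_{\geq 0}^k)$. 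The key step is to deduce from this that $(E,E_+)$ is a vector lattice, via the formula $a \vee b = X\big(\Psi(a) \vee \Psi(b)\big)$ for $a,b \in E$, the supremum on the right being taken coordinatewise in $\R^k$: this element is an upper bound of $\{a,b\}$ because $X$ and $\Psi$ are positive, and it is the least one because any upper bound $c$ satisfies $\Psi(c) \geq \Psi(a) \vee \Psi(b)$ and hence $c = X\Psi(c) \geq X(\Psi(a) \vee \Psi(b))$. Since $E_+$ is closed, $E$ is Archimedean --- if $na \leq b$ for all $n$, then $\tfrac1n b - a \in E_+$ for every $n$, and letting $n \to \infty$ gives $-a \in E_+$ --- so by the classical structure theorem $E$ is lattice isomorphic to $\R^n$ with the coordinatewise ordering, and transporting the standard basis back exhibits $E_+$ as a simplex cone.

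I expect the main obstacles to be concentrated in this last direction: the somewhat fiddly verification that $\mathcal{C}$ is closed (which is exactly where the hypotheses ``$E_+$ closed and generating'' get used), and, more conceptually, the recognition of the hidden lattice structure in the retraction $X \circ \Psi = \id_E$ together with the appeal to the structure theorem for finite-dimensional Archimedean Riesz spaces; everything else is routine linear algebra. An alternative route would be to use the trace pairing on $E \tensor E\algdual$, under which the dual cone of $\mathcal{C}$ is precisely the cone of positive operators on $E$, so that $\id_E \in \overline{\mathcal{C}}$ translates into the statement that every positive operator on $E$ has nonnegative trace; but extracting ``$E_+$ is a simplex cone'' from that appears to require the same lattice-theoretic input, so I would favour the retraction argument above.
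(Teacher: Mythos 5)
The paper does not prove this theorem; it quotes it from Barker and Loewy \cite{Barker-Loewy}, so there is no in-paper argument to compare against. Judged on its own terms, your proof is correct and self-contained. The reduction to exact membership works: with $E_+$ closed, pointed (which ``ordered'' already guarantees, so your non-pointed digression is superfluous) and generating, both $E_+$ and $E_+\algdual$ admit compact bases $B$, $B^*$ (this existence deserves one line: a strictly positive functional is an interior point of the dual cone, available precisely because of these hypotheses), $\operatorname{conv}(S)$ is compact by Carath\'eodory, the functional $T \mapsto \langle Tg, f\rangle$ with $g \in \operatorname{int} E_+$, $f \in \operatorname{int} E_+\algdual$ keeps it away from $0$, and $\R_{\geq 0}\cdot\operatorname{conv}(S)$ is then closed; note that this step reproves, for arbitrary closed generating cones, exactly the result of Tam \cite{Tam} that the paper's remark cites as a later strengthening of \autoref{thm:Barker-Loewy}, so your argument in fact yields the stronger ``$\id_E \in \mathcal{C}$ itself iff $E_+$ is a simplex cone''. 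The substantive direction is also sound: from $X \circ \Psi = \id_E$ with $X, \Psi$ positive you correctly verify that $a \vee b := X\bigl(\Psi(a) \vee \Psi(b)\bigr)$ is a least upper bound (upper bound by positivity of $X$ applied to $\Psi(a)\vee\Psi(b)-\Psi(a) \geq 0$; minimality by positivity of $\Psi$ then $X$), closedness of $E_+$ gives the Archimedean property, and the structure theorem for finite-dimensional Archimedean Riesz spaces transports the standard basis of $\R^n$ to a basis generating $E_+$. This ``positive retract of $\R^k$ is a vector lattice'' idea is the conceptual core and is a legitimate route to the result, independent of Barker and Loewy's original matrix-theoretic treatment; the only polishing needed is to cite or prove the standard facts you invoke (compact bases, closedness of $\R_{\geq 0}C$ for compact convex $C \not\ni 0$, and the finite-dimensional Riesz space structure theorem, e.g.\ Luxemburg--Zaanen).
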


In particular, if $E$ and $F$ are as in \autoref{sit:polyhedral-cone}, then it follows from \autoref{prop:extendable-vs-projective-cone} and \autoref{thm:Barker-Loewy} that the identity $\id_E : E \to E$ cannot be approximated by positive operators that can be extended to positive operators $F \to E$.

\begin{remark}
	Although \autoref{thm:Barker-Loewy} is exactly what Barker and Loewy proved and exactly what we used, it was later shown by Tam \cite[Thm.~1]{Tam} that the cone of operators of the form $\sum_{i=1}^k x_i \tensor \psi_i$ (with $x_1,\ldots,x_k \in E_+$ and $\psi_1,\ldots,\psi_k \in E_+\algdual$) is already closed.
	Therefore another equivalent condition in \autoref{thm:Barker-Loewy} is that $\id_E$ itself can be written in this positive tensor form.
	
	In \autoref{sit:polyhedral-cone}, an easier way to see that the cone generated by $\{x \tensor \psi \, : \, x \in E_+,\, \psi \in E_+\algdual \}$ is closed is by noting that it is finitely generated.
	We conclude that the positive operators $E \to E$ which can be extended to a positive operator $F \to E$ form a closed cone which is strictly contained in the (closed) cone of all positive operators $E \to E$.
\end{remark}

\begin{acknowledgement}
	The author wishes to thank Marcel de Jeu for helpful comments and suggestions, which helped improve the clarity and brevity of this note.
\end{acknowledgement}

\phantomsection
\addcontentsline{toc}{chapter}{References}
\small

\end{document}